 \newtheorem{thm}{Theorem}[section]
 \newtheorem{lem}[thm]{Lemma}
  \newtheorem{conj}[thm]{Conjecture}
 \newtheorem{rem}[thm]{Remark}
 \numberwithin{equation}{section}
\newtheorem{lem*}{Lemma}
\newtheorem{cor*}{Corollary}
\begin{document}

\title[On Realization of Tangent Cones]{On Realization of Tangent Cones of Homologically Area-minimizing Compact Singular Submanifolds}

\author{Yongsheng Zhang}
%\address{{\em Current address}: School of Mathematics and Statistics, Northeast Normal University, 5268 RenMin Street, NanGuan District, ChangChun, JiLin 130024, P.R. China }
%\address{Mathematical Sciences Research Institute, 17 Gauss Way, Berkeley, CA 94720, U.S.A.}
%\email{zhangys100@nenu.edu.cn}
\email{yongsheng.chang@gmail.com}
%\thanks{Partially supported by N.S.F. and M.S.R.I}
%%\date{\today}
\date{\today}

\begin{abstract}

     We show that every area-minimizing hypercone and every oriented Lawlor cone in \cite{Law}
      can be realized as a tangent cone at a point of some homologically area-minimizing singular {compact} submanifold.
      In particular this generalizes the result of N. Smale \cite{NS}.
\end{abstract}
 
 \keywords{Minimizing hypercone, Lawlor cone, tangent cone, 
 homologically area-minimizing {\em compact} submanifold, realization problem, mollification of calibrations} 
 \subjclass{Primary~28A75, Secondary~53C38}

\maketitle
%{\setcounter{tocdepth}{1} \small \tableofcontents}
\section{Introduction}\label{Section1}

     Let $C$ be a $k$-dimensional cone over link $L\subset S^{n-1}(1)$
     in an Euclidean space $(\mathbb R^{n},g_E)$.
               We call $C$ area-minimizing (mass-minimizing)
               if $C_1=C\bigcap \bold{B}^{n}(1)$ has least mass
               among all integral (normal) currents (see \cite{FF}) with boundary $L$. 
       We say that a $d$-closed compactly supported integral current in a Riemannian manifold is 
       homologically area-minimizing (mass-minimizing)
       if it has least mass in its homology class of integral (normal) currents.

        A well-known result of Federer (Theorem 5.4.3 in \cite{F},
        also see Theorem 35.1 and Remark 34.6 (2) in Simon \cite{LS})
        asserts that a tangent cone at a point of an area-minimizing rectifiable current
        is itself area-minimizing.
        This paper studies its converse realization question by {compact} submanifolds ($\star$):
%$(\bigstar)$ 
\\{\ }

\begin{quote}
{\em Can any area-minimizing cone be realized as a tangent cone at a point
of some homologically area-minimizing  {\em compact} singular submanifold?}
\end{quote}
{\ }

              Through techniques of geometric analysis and Allard's regularity theorem, N. Smale found realizations for
              all strictly minimizing, strictly stable hypercones (see \cite{HS}) in \cite{NS}.
              They are first examples of codimension one homological area-minimizers with singularities.

              Very recently, different realizations of many area-minimizing cones,
              including all homogeneous minimizing hypercones (classified by Lawlor \cite{Law}, also see \cite{BL} and \cite{Z2})
              and special {Lagrangian} cones,
              by extending local calibration pairs were discovered in \cite{Z12}.
              
              However in general the answer to $(\star)$ is still far to be known. 
              In this paper, we focus on two important classes of mass-minimizing cones $-$ 
              minimizing hypercones
                    \footnote{
                    By \cite{F2} or \cite{FM2},
                    the area-minimality of a hypercone is equivalent to its mass-minimality.                    
                    So we say minimizing for short.}
              and oriented Lawlor cones.
              \\{\ }
              
              For hypercones,
              two long-term standing conjectures (or equivalent versions) raised
              by Simon, Hardt and Simon respectively are the followings.
%\\{\ }
\begin{conj}
Except trivial examples in low dimensions, all minimizing hypercones are strictly area-minimizing?
\end{conj}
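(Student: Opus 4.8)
The plan is to obtain the strict minimality by the same mechanism that delivers it in every case presently understood: to build, out of the minimizing hypercone $C^{n}\subset\mathbb R^{n+1}$ itself (taken non-planar, with its only singularity at the origin), a \emph{calibration} on $\mathbf B^{n+1}(1)$ whose calibrated $n$-planes are precisely the tangent field of a smooth minimal foliation having $C_{1}$ among its leaves. Granting such a calibration $\omega$, if $T$ is any integral current with $\partial T=\partial C_{1}$, $\operatorname{spt}T\subset\overline{\mathbf B^{n+1}(1)}$ and $\mathbf M(T)=\mathbf M(C_{1})$, then equality in the calibration inequality forces the approximate tangent plane of $T$ to agree almost everywhere with the tangent plane of the leaf through the point; the standard fact that an integral current tangent almost everywhere to a $C^{1}$ foliation is carried by the leaves, together with the constancy theorem, then exhibits $T$ as an integer combination of relatively open pieces of leaves. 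Since every leaf other than $C$ is asymptotic to, but disjoint from, $C$, and so meets $S^{n}(1)$ away from $\partial C_{1}$, no such piece distinct from $C_{1}$ can occur without introducing spurious boundary or excess mass; hence $T=C_{1}$ and $C$ is strictly minimizing.

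\textbf{The strictly stable case.}
When $C$ is strictly stable this foliation is furnished by Hardt and Simon \cite{HS}: $\mathbb R^{n+1}\setminus C$ is foliated by the positive dilates of two complete smooth area-minimizing hypersurfaces $S_{+}$ and $S_{-}$, one on each side of $C$, each asymptotic to $C$ at the rate set by the smallest indicial root of the Jacobi operator of $C$; adjoining the singular leaf $S_{0}=C$ one gets a foliation of $\mathbb R^{n+1}$ whose oriented unit normal $\nu$ is smooth away from the origin and extends continuously across $C\setminus\{0\}$ with the orientation of $C$. As each leaf is minimal we have $\operatorname{div}\nu\equiv 0$ off the origin, so $\omega:=\iota_{\nu}\bigl(dx^{1}\wedge\dots\wedge dx^{n+1}\bigr)$ is a closed $n$-form of comass one there; because $\{0\}$ has codimension $n+1\ge 8$ and $\omega$ is bounded, $\omega$ extends to a bona fide calibration on $\mathbf B^{n+1}(1)$ whose contact set is exactly this foliation. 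Inserting it into the previous paragraph yields strict minimality for every strictly stable minimizing hypercone — in particular for all those realized as tangent cones by N.\ Smale \cite{NS}.

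\textbf{The remaining case, and the main obstacle.}
What is left — and this is the crux — is a minimizing hypercone that fails to be strictly stable, i.e.\ one whose link has first Jacobi eigenvalue sitting exactly at the threshold $-\bigl(\tfrac{n-2}{2}\bigr)^{2}$, so that the two relevant indicial roots collide at $-\tfrac{n-2}{2}$; there the Hardt--Simon construction degenerates, the would-be leaves picking up $|x|^{-(n-2)/2}\log|x|$ corrections while a Jacobi field decaying at the critical rate threatens both the existence and the uniqueness of the foliation. One would then either construct a foliation with a non-strictly-stable singular leaf from scratch, by a continuity or fixed-point argument in weighted H\"older spaces that pins down the logarithmic asymptotics and rules out the critical Jacobi field, and run the calibration argument as above; or abandon the foliation and prove strict minimality directly, via a quantitative second-variation estimate combined with unique continuation, forbidding any equal-mass competitor that separates from $C$. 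I expect this to be the genuine difficulty, for two reasons: the borderline regime is precisely where the elliptic theory behind \cite{HS} loses its Fredholm properties, and — more seriously — the minimizing hypercones are not classified beyond Lawlor's homogeneous list \cite{Law}, so ``all minimizing hypercones'' cannot currently be examined case by case. The ``trivial examples in low dimensions'' set aside in the statement are simply the degenerate cones, hyperplanes and their kin, for which strictness is either vacuous or classical; it is the interplay of the two difficulties above that keeps the assertion a conjecture.
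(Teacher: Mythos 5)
The statement you are asked about is not a theorem of the paper at all: it is Conjecture 1.1, one of two long-standing open conjectures (attributed to Simon, and to Hardt--Simon) that the paper records precisely in order to explain that strict minimality and strict stability are \emph{not} available hypotheses, and that its own constructions (via the singular Hardt--Simon ``calibration'' in \S 5) deliberately avoid them. The paper contains no proof of this statement, and your proposal is not one either --- your final paragraph concedes that the non-strictly-stable case, and indeed the absence of any classification of minimizing hypercones, leaves the assertion open. So there is no argument to compare against: what you have written is a plausible program for the easy half plus an accurate description of why the hard half is hard.

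Two substantive corrections to the program itself. First, the foliation you invoke is not tied to strict stability: Theorem 2.1 of \cite{HS} (quoted verbatim in \S 5 of this paper) produces the smooth minimizing leaf $H_{\pm}$ on each side of \emph{any} area-minimizing hypercone, and the dilates $\{tH_{\pm}\}$ foliate $\mathbb R^{N}\setminus C$ with no stability hypothesis; strict stability and strict minimality only govern the \emph{rate} at which the leaves approach $C$. So your dichotomy misplaces the difficulty. Second, and more seriously, the calibration argument you sketch proves at best that $C_{1}$ is the \emph{unique} mass minimizer with boundary $L$ --- a statement already contained in the minimality of $C$ together with the foliation --- whereas ``strictly area-minimizing'' in the sense of \cite{HS} is a quantitative condition: a definite mass gap, with a specified dependence on the size of the region where a competitor is allowed to deviate from $C$. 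Equality analysis in the calibration inequality gives no such gap; indeed, if comass-one forms calibrating the Hardt--Simon foliation sufficed for strictness, every minimizing hypercone would be strictly minimizing and the conjecture would have been settled in 1985. Closing that quantitative gap (or producing a counterexample) is exactly what remains open, which is why the paper states it as a conjecture and builds its realization theorems so as not to need it.
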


\begin{conj}
Any non-trivial strictly area-minimizing hypecone is always strictly stable?
\end{conj}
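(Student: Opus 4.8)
This is a long-standing conjecture of Hardt and Simon, so what follows is an attack I would attempt rather than a routine argument. I would proceed by contradiction. Let $C^{k}$ be a non-trivial minimizing hypercone over a link $L^{k-1}\subset S^{k}$, and suppose $C$ is strictly area-minimizing yet \emph{not} strictly stable in the sense of Hardt--Simon. Since an area-minimizing cone is stable, the failure of strict stability forces the bottom eigenvalue $\mu_{1}$ of the Jacobi operator $J_{L}=-\Delta_{L}-|A_{L}|^{2}$ on the link to equal the sharp Hardy threshold $-\tfrac{(k-2)^{2}}{4}$. Then the indicial equation $\gamma(\gamma+k-2)=\mu_{1}$ for the cone's Jacobi operator $L_{C}=\Delta_{C}+|A_{C}|^{2}$ has the single double root $\gamma_{0}=-\tfrac{k-2}{2}$, so $C\setminus\{0\}$ carries exactly the two homogeneous Jacobi fields $u_{0}=r^{\gamma_{0}}\varphi_{1}(\omega)$ and $u_{1}=r^{\gamma_{0}}(\log r)\,\varphi_{1}(\omega)$, with $\varphi_{1}>0$ the first eigenfunction on $L$. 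The gradient of $u_{0}$ just fails to be square-integrable near the vertex; this logarithmic, borderline behaviour is the analytic feature the argument must exploit.

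The core step is to convert this critically slow decay into a competitor that defeats the quantitative lower bound defining strict minimality. I would solve the minimal surface equation for a normal graph over $C$ on an annulus $C\cap(B_{R}\setminus B_{\rho})$ by a fixed-point/implicit-function scheme built on the linearization $L_{C}$, prescribing the graph to coincide with $C$ on $\partial B_{\rho}$ and to equal a small multiple $\varepsilon\,\varphi_{1}$ on $\partial B_{R}$; the double indicial root forces the solution to have leading term $(\alpha+\beta\log r)\,r^{\gamma_{0}}\varphi_{1}$, hence to decay only marginally. Gluing this graph to $C$ outside $B_{R}$ and capping it off inside $B_{\rho}$ (either through the vertex or by a small smooth minimal patch), then expanding the area along the resulting family and optimizing in the parameters $\varepsilon$, $\alpha$, $\beta$, one should obtain an integral current $S$ with $\partial S=\partial(C\cap B_{R'})$ whose area exceeds that of $C\cap B_{R'}$ by an amount that is $o$ of the quantity --- the $k$-th power of the $\sup$-distance of $\operatorname{spt}S$ to $C$, or the $L^{2}$ excess, according to which formulation of ``strictly area-minimizing'' one adopts --- that strict minimality demands be bounded below. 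That contradiction would prove the conjecture.

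The hard part, and the reason the statement remains open, is precisely that comparison: at the Hardy exponent $\gamma_{0}=-\tfrac{k-2}{2}$ the first and second variations of area enter at the same scaling order, so the sign of the net area change is governed by log-order and lower-order terms and demands a sharp expansion of the area functional along the constructed family, in the spirit of the Hardt--Simon and Caffarelli--Hardt--Simon barrier constructions, together with genuine control near the singular point. A sensible preliminary step is to check that neutral stability is really incompatible with the intended quantitative notion of strict minimality at all, first testing this on Lawlor's minimizing hypercones over products of round spheres --- where $\varphi_{1}$, $\mu_{1}$ and $|A_{L}|^{2}$ are explicit and $\mu_{1}$ is found to sit strictly above the Hardy threshold --- and examining the transition at the boundary of Lawlor's minimizing range, which is the only place a neutrally stable minimizing hypercone could conceivably arise.
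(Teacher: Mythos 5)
The statement you are asked about is not a theorem of the paper at all: it is one of the two long-standing open conjectures (attributed to Hardt and Simon) quoted in the introduction purely as motivation, and the paper offers no proof of it. Indeed, the whole point of the paper's constructions in \S 5--\S 6 is to \emph{bypass} strict stability and strict minimality by working directly with the singular Hardt--Simon foliation calibration, so there is no ``paper proof'' for your attempt to be compared with. Your write-up, to its credit, acknowledges this, but as it stands it is a research program, not a proof, and it should not be presented as settling the statement.

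The concrete gap is the central comparison you defer to the end. From stability plus failure of strict stability you correctly place the bottom eigenvalue of the link Jacobi operator at the Hardy threshold, giving homogeneous Jacobi fields $r^{-(k-2)/2}\varphi_1$ and $r^{-(k-2)/2}(\log r)\varphi_1$; but nothing in your argument shows that these borderline-decaying solutions of the \emph{linearized} equation can be promoted to an actual competitor current whose area deficit violates the quantitative inequality defining strict minimality. At the critical exponent the first and second variation contributions enter at the same scaling order, so the sign of the net area change along your glued family is determined by log-order and lower-order terms that you have not estimated; without that sharp expansion (and without control of the capping near the vertex), the contradiction you aim for is simply asserted. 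Your own ``preliminary step'' --- checking whether neutral stability is even incompatible with strict minimality --- concedes that the implication you need is exactly the content of the conjecture, so the argument is circular at its key point. In short: the reduction to $\mu_1=-\tfrac{(k-2)^2}{4}$ is fine, but the passage from a neutrally stable linearization to a competitor beating strict minimality is missing, and that is precisely why the conjecture remains open.
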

%{\ }\\
                 Up to now it is unclear how far it is for a minimizing hypercone
                 to be strictly stable and strictly area-minimizing.
                 An important characterization of minimizing hyercones in \cite{HS}
                 is that each of them possesses a canonical singular ``calibration".
\\{\ }

                 By Lawlor cones we mean area-minimizing cones shown in \cite{Law}.
                 He studied when certain preferred bundle structure
                     (somehow analogous to that in \cite{HS} for hypercones,
                      nevertheless involving curvatures more heavily without the limitation to codimension one)
                      of some angular neighborhood of a minimal cone exists,
                  and successfully added quite a few interesting new oriented area-minimizing cones
                 (and non-orientable area-minimizing cones in the sense of modulo $2$ as well).
                 In the oriented case, such bundle structure naturally induces a
                 ``calibration" of the cone that is singular in a set of codimension one
                 and possibly also along the cone.
\\{\ }

                        By virtue of these peculiar calibrations of minimizing hypercones
                        and oriented Lawlor cones, we obtain realizations for them.
                        
                        \begin{thm}\label{thm2}
                        Every minimizing hypercone
                        can be realized to $(\star)$.
                        \end{thm}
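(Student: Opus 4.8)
The plan is to realize a given minimizing hypercone $C\subset\mathbb R^{n+1}$, with link $L\subset S^{n}$, as a tangent cone at an isolated singular point of an embedded cycle $M$ inside a compact Riemannian manifold $(N^{n+1},g)$ with $H_{n}(N;\mathbb Z)\neq 0$, arranged so that near that point $p$ the surface $M$ is \emph{exactly} the cone $C$ in suitable coordinates, $M$ is a smooth submanifold on $M\setminus\{p\}$, and $M$ is calibrated by a smooth closed $n$-form $\widetilde\phi$ on $N$ of comass $\le 1$. Granting such an $M$, it is homologically area-minimizing in $(N,g)$, while the blow-up of $M$ at $p$ is $C$ itself, since $M$ is conical near $p$; so $(\star)$ holds. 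Theorem~\ref{thm2} generalizes \cite{NS} because the construction below uses only the canonical singular calibration $\phi_{C}$ of \cite{HS} and its associated Hardt--Simon foliation of $\mathbb R^{n+1}\setminus C$ by smooth minimal hypersurfaces asymptotic to $C$, with no hypothesis of strict stability or strict minimality.

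First I would build an approximate model in $\mathbb R^{n+1}$. On each side of $C$ pick a Hardt--Simon leaf, truncate it inside a large ball $B_{R}$, and close it off through its smooth ``bowl'' near the origin; suitably rescaled, such a leaf meets $\partial B_{R}$ in a hypersurface $C^{1}$-close to a dilate of $L$ and is calibrated by $\phi_{C}$ on its interior. Grafting in the exact truncated cone $C\cap B_{r_{0}}$ near the vertex, through a thin annular transition region $\mathcal T=B_{r_{1}}\setminus \overline{B_{r_{0}}}$ on which the tangent planes interpolate between those of $C$ and those of the leaves, produces a closed singular hypersurface $M_{0}$ that equals $C$ on $B_{r_{0}}$, equals leaf pieces off $\mathcal T$, and is only approximately $\phi_{C}$-calibrated on $\mathcal T$. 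Since all of this lives in a bounded region, attaching a collar and cap yields the compact ambient $N$, chosen so that $[M_{0}]\neq 0$; this compactification is essential, since $\mathbb R^{n+1}$ being contractible admits no nonzero homological minimizer.

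The decisive step is to upgrade $M_{0}$ to an exactly calibrated cycle. Convolving $\phi_{C}$ in a flat chart with a small bump supported near $\mathcal T$ gives $\widetilde\phi$, which stays closed (convolution commutes with $d$) and has comass $\le 1$ (convolution does not increase comass), is now smooth, and is $C^{0}$-close to $\phi_{C}$. Because the maximal-plane distribution of $\widetilde\phi$ near the cone is a small smooth perturbation of the integrable tangent distribution of the Hardt--Simon foliation, an implicit-function-type perturbation argument, together with a conformal change of $g$ on $\mathcal T$ arranged both to restore comass $1$ and to force the relevant hypersurface to be tangent to that distribution, lets one deform $M_{0}$ on $\mathcal T$ into a smooth $M$ calibrated by $\widetilde\phi$ everywhere, still equal to the exact cone near $p$ and to leaf pieces far out; extending $\widetilde\phi$ over the collar and cap produces a global calibration on $N$, so $M$ is homologically area-minimizing there. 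The main obstacle is precisely this last step: producing a smooth closed comass-$\le 1$ form whose maximal distribution is integrable near the cone, and reconciling the mollified leaves with the exact cone across $\mathcal T$ through a controlled metric perturbation, all the while using only the general, possibly highly singular, Hardt--Simon calibration, since Theorem~\ref{thm2} is asserted for \emph{every} minimizing hypercone and not merely the strictly stable, strictly minimizing ones treated in \cite{NS}. The remaining items — compactifying $N$, checking smoothness of $M$ off $p$, and reading off the tangent cone from conicality at $p$ — are routine.
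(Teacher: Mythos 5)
Your overall scaffolding (compact ambient with $[M]\neq 0$, exact cone near the singular point, Hardt--Simon foliation as the source of the calibration, a transition annulus, a metric adjustment) matches the paper's, but two of your key steps are genuine gaps, and both arise because you modify the \emph{submanifold} where the paper modifies only the \emph{form and the metric}. First, your $M_0$ interpolates between the cone and Hardt--Simon leaves across $\mathcal T$; the leaves $H_\pm$ lie in the open components $E_\pm$ of the complement of $C$ and are disjoint from $C$, so this hybrid is not obviously even a well-defined cycle, and in any case it is not minimal or calibrated on $\mathcal T$. You then must deform it to an integral submanifold of the maximal-plane distribution of a mollified form $\widetilde\phi$ --- but that distribution has no reason to be integrable, and no implicit-function argument produces such an integral hypersurface; you correctly flag this as ``the main obstacle'' but do not resolve it. The paper sidesteps all of this: the submanifold $S$ is never deformed (near $p_i$ it is exactly the cone, elsewhere it is $\Sigma_C\# M$), no mollification is needed in the hypercone case because the Hardt--Simon form $\phi$ is already $C^1$ across $C\sim 0$ (the leaves are $C^2$ graphs over $C$), and the interpolation happens in the \emph{form}: $\Phi=d[\tau(\bold r)\pi^*\psi+(1-\tau(\bold r))\varpi^*\psi]$ is exact, hence closed, and restricts to the volume form of $C$ automatically. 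The comass defect created by the interpolation is then shown, via the Lie-derivative computation and the fact that $\phi$ is a calibration, to be only $O(\bold d_{g_E}^2)$ off the cone (inequality \eqref{ineq}), and a conformal plus normal-stretching change of metric --- which does not move $S$ and equals $g_E$ along $C$ --- restores comass $\le 1$. A $C^1$ calibration then suffices by Federer's Theorem 6.2 in \cite{F2}.

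Second, your global step ``extending $\widetilde\phi$ over the collar and cap produces a global calibration on $N$'' is asserted without justification, and it is false in general: extending a closed comass-$\le 1$ form from a neighborhood of $S$ to all of $X$ meets both cohomological and comass obstructions (this is precisely the hard extension problem treated separately in \cite{Z12}). The paper instead proves a reduction theorem (Theorem \ref{fgtl}): it shrinks the metric conformally near $S$ so that $\bold M(S)$ is below the Allard-monotonicity lower bound $\beta$ of Lemma \ref{1} for the mass of any stationary integral cycle whose support reaches a definite distance from $S$; hence any homological minimizer must stay inside the calibrated neighborhood, where $\Phi$ forces it to have the mass of $S$. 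You need either this mass-gap argument or a proof of the extension claim; as written, neither is supplied.
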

                        
                        \begin{rem}
                        Our construction removes the requirements
                        of a minimizing hypercone's being
                        strictly stable and being strictly minimizing in \cite{NS}.
                        Hence the case of codimension one is completely settled.
                        \end{rem}

                       \begin{thm}\label{thm1}
                       Every oriented Lawlor cone can be realized to $(\star)$.
                       \end{thm}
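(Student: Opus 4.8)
The plan is to run, for an oriented Lawlor cone $C\subset(\mathbb R^n,g_E)$, the very scheme that yields Theorem \ref{thm2}, with Lawlor's bundle data of \cite{Law} playing the role that the Hardt--Simon foliation of \cite{HS} plays for hypercones. First I would extract from the preferred bundle structure over an angular neighbourhood of $C$, on one side of $C$, a one-parameter family $\{S_t\}_{t>0}$ of smooth minimal submanifolds asymptotic to $C$, together with the induced closed form $\phi_C$ of comass $1$ that calibrates $C$ and each $S_t$ and whose singular set has codimension one and possibly contains $C$ itself. Recording this calibration and its singular locus precisely, and checking that $\phi_C$ indeed calibrates every leaf, is the preliminary step.

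Second, I would assemble the ambient compact manifold and the candidate minimizer exactly as in the codimension-one case. Keep the truncated cone $C\cap\bar B_\rho$ near the vertex $0$; for a large scale $R$ the rescaled leaf $\frac1R S_R$ is a graph over $C$ of small $C^2$-size in the annulus $B_{2\rho}\setminus B_\rho$, so interpolate there from $C$ to $\frac1R S_R$ and continue outward along $\frac1R S_R$; then cap off and glue the result into a closed Riemannian manifold $N$ (by doubling across a totally geodesic hypersurface, or by an embedding into a round sphere), leaving a neighbourhood of $0$ isometric to a Euclidean ball so that near $0$ the pair $(N,M)$ is a copy of $(\mathbb R^n,C)$. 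By construction $M$ is a compact submanifold whose only singular point is $0$ and whose tangent cone there is exactly $C$.

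Third --- the technical core, the \emph{mollification of calibrations} of the title --- I would upgrade $\phi_C$ to a genuine smooth calibration on all of $N$ that calibrates $M$: near $0$, mollify $\phi_C$ across its codimension-one singular set (and along $C$) to a smooth \emph{closed} form of comass $\le 1$ still restricting to the volume form on the tangent planes of $C$; on the interpolation annulus and along $\frac1R S_R$, transport the mollified form through the bundle structure and, where needed, perturb the metric of $N$ so that the interpolated submanifold stays a calibrated face; on the capped region use the standard calibration compatible with the doubling or embedding. Holding the form closed under the smoothing while keeping its comass equal to $1$ precisely along $TM$ is where the higher-codimension curvature terms of Lawlor's construction make the estimates heavier than for a hypersurface's second fundamental form; this is the step I expect to be the main obstacle, and it is to be met by the same cut-off-and-correct argument used for Theorem \ref{thm2}, now carried out with Lawlor's bundle curvature in place.

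Once $\phi$ is such a smooth closed comass-$1$ form calibrating $M$, the calibration inequality makes $M$ homologically area-minimizing, indeed mass-minimizing, in its homology class in $N$, and since its tangent cone at $0$ is $C$ this realizes $C$ to $(\star)$. As in the Remark following Theorem \ref{thm2}, no strict stability or strict minimality of $C$ is needed, since minimality is obtained from the calibration rather than from a stability-and-regularity analysis in the spirit of \cite{NS}.
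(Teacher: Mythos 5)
There are two genuine gaps, and they sit at the heart of your plan. First, your Step 1 posits a Hardt--Simon-type one-parameter family $\{S_t\}$ of smooth minimal submanifolds asymptotic to $C$, calibrated by the form coming from Lawlor's bundle structure, and your Step 2 builds the compact submanifold by interpolating from $C$ to a rescaled leaf $\tfrac{1}{R}S_R$. No such leaves are provided by \cite{Law}: Lawlor's data is an angular neighbourhood $\mathcal N$ of the cone fibred over $C$ together with a form $\phi=d(f\,\tilde\psi)$ that calibrates only the cone and is \emph{discontinuous} along the boundary $\mathfrak B$ of $\mathcal N$ (it vanishes outside $\overline{\mathcal N}$). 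So the desingularized graph you propose to glue in has no input data. In the paper the realized current is the singular $S$ itself, obtained from the connected-sum model of \S\ref{s2}, and the passage from a neighbourhood of $S$ to a closed manifold is made not by a global calibration on a capped-off space but by the reduction of Theorem \ref{fgtl}: the Allard-monotonicity mass bound of Lemma \ref{1} plus a conformal shrinking of the metric near $S$, using only $[S]\neq 0$. Your plan to produce a smooth closed comass-one form on all of $N$ calibrating $M$ (including the capped region) is much stronger than what is needed and is not substantiated.

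Second, and decisively, your Step 3 claims the discontinuous calibration can be mollified to a smooth closed form of comass $\le 1$ still restricting to the volume form along $C$, the difficulty to be met ``by the same cut-off-and-correct argument used for Theorem \ref{thm2}.'' That argument does not address the problem: in the hypercone case the Hardt--Simon form is already $C^1$ across the cone and smooth elsewhere, so there is no codimension-one jump to smooth and the cut-off only adjusts the metric. For a Lawlor cone, mollification across $\mathfrak B$ does \emph{not} preserve comass; the paper remarks explicitly that the mollified forms $\Phi_\epsilon$ may have comass greater than one under $\bar g$, and no smooth comass-$\le 1$ calibration is ever produced. The paper's actual mechanism, absent from your proposal, is: after building (as in \S\ref{s5}, but using Lawlor's expansion $\tilde f(t)=1-at^2-\cdots$ and his Lemma 2.3.2 in place of the foliation estimates) a pair $(\Phi,\bar g)$ with comass $\le 1$ where $\Phi$ is defined, one fixes a homological minimizer $T$ in $[S]$ by Federer--Fleming compactness, mollifies through an isometric embedding to get closed smooth forms $\Phi_\epsilon$, uses closedness to write $\int_S\Phi_\epsilon=\int\Phi_\epsilon(\overrightarrow T)\,d\|T\|$, shows $\Phi_\epsilon(\overrightarrow T)\to\Phi(\overrightarrow T)\le 1$ pointwise $\|T\|$-a.e.\ (after disposing of the bad set of $\bold{spt}T$ lying on $\mathfrak B$), and concludes $\bold{M}(S)\le\bold{M}(T)$ by Lebesgue's bounded convergence theorem. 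Without this limiting argument against the competitor current --- or a genuinely new construction of a comass-$\le 1$ smooth calibration, which is exactly the open difficulty --- your proof does not close.
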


                       \begin{rem}
                       This answers affirmatively to $(\star)$ for lots of area-minimizing cones of higher codimensions,
                       for instance, a minimal cone $C$ over a product of two or more spheres satisfying
                       (1) $\dim(C)>7$, or (2) $\dim(C)=7$ with none of the spheres being a circle
                       (cf. Theorem 5.1.1 in \cite{Law}).
                       These cones do not split.
                        Namely, they cannot be written as products of two or more area-minimizing cones of lower dimensions
                       (vs. N. Smale \cite{NS2}).
                       \end{rem}
                       
                     %  Moreover, an observation Remark \ref{R} shows the following.
                       %       \begin{cor}
                         %     Every area-minimizing cone obtained by \cite{BL} (and \cite{Z2}) or its generalization,
                           %   for example, homogeneous area-minimizing cones of cohomogeneity $3$ in Euclidean spaces
                              %shown in \cite{Ch} can be realized to $(\star)$.
                              %\end{cor}

The paper is organized as follows.
In \S\ref{s2} our preferred model $S$ of construction is introduced.
By a monotonicity result of Allard, we get Lemma \ref{1}
which helps us transform the global realization question to a local problem around $S$ in \S\ref{s4}.
Thus, we only need to construct a smooth metric $\bar g$ on some neighborhood $\tilde U$ of $S$
such that $S$ is homologically area-minimizing in $\tilde U$.

We discuss the case of codimension one in \S\ref{s5}.
There are two steps.
              First suitably extend the canonical (local, singular and non-coflat) calibrations around $p_1$ and $p_2$ (see \S\ref{s2})
to a $C^1$ closed form $\Phi$ in a neighborhood $\tilde U$ of $S$.
               Then a smooth metric $\tilde g$ can be created to make $\Phi$ a $C^1$ calibration of $S$.
Hence we gain the homological area-minimality of $S$ in $\tilde U$.

        In \S\ref{s6} realizations of oriented Lawlor cones are constructed.
        The idea is roughly the same.
        However the calibration is discontinuous in a set of codimension one.
        So we consider its regularization through convolution for the desired local homological area-minimality of $S$.
        Although the approximating closed forms may have comass greater than one somewhere, 
        by the mildness of calibrations in \cite{Law} and {Lebesgue}'s bounded convergence theorem,
        the needed area-minimality can be achieved.
\\{\ }

         \section{Model of Construction}\label{s2}
                          Given a $k$-dimensional cone $C\subset \mathbb R^N$.
                          As in \cite{NS}, consider $\Sigma_C\triangleq (C\times \mathbb R)\bigcap S^{N}(1)$ in $ \mathbb R^{N+1}$.
%For example, take $T$ to be an $n$-dimensional torus.
                         Let $M$ be an embedded oriented  connected compact $k$-dimensional submanifold in
                         some $N$-dimensional oriented compact manifold $T$ with $[M]\neq [0]\in H_k(T;\mathbb Z)$.
                         Within smooth balls round a point of $M$ and a regular point of $\Sigma_C$ respectively
                         one can connect $T$ and $S^N(1)$, $M$ and $\Sigma_C$ simultaneously through one connected sum.
                         Denote by $X$ and $S$ the resulting manifold and submanifold (singular at two points $p_1$ and $p_2$).
                         Apparently $[S]\neq [0]\in H_k(X;\mathbb Z)$.
\\{\ }

%%%%%%
%%%%%%
%%%%%%
       \section{Positive Lower Bound of Mass}\label{s3}
       
                         The lemma below will play a key role in \S\ref{s4}.
                         \begin{lem}\label{1}
                                       Let $g$ be a metric on a compact manifold $X$, 
                                       $W\Subset X$ an open domain where $\overline W$ forms a manifold with nonempty boundary $\partial \overline W$,
                                       and $\alpha$ a positive number.
                                       Then there exists $\beta=\beta_{\alpha,g|_{\overline W}}>0$ such that
                                       every rectifiable current $K$ in $W$
                                       with no boundary, vanishing generalized mean curvature vector field $\delta K$
                                       and at least one point in its support $\alpha$ away from $\partial \overline W$
                                       has mass greater than $\beta$.
                         \end{lem}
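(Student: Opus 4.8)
The plan is to deduce $\beta$ directly from the monotonicity formula for stationary integral varifolds, applied in $(\overline W, g)$. Since $\partial K = 0$ and $\delta K = 0$, the integer-multiplicity varifold underlying the rectifiable current $K$ is stationary in $X$; let $j$ denote its dimension and $\omega_j$ the volume of the unit $j$-ball. First I would record, using the compactness of $\overline W$ and the smoothness of $g$, that there are $r_0 > 0$ and $\Lambda \ge 0$ depending only on $g|_{\overline W}$ (through a uniform lower bound on the injectivity radius and a uniform bound on sectional curvature over $\overline W$) such that for every $x \in \overline W$ the geodesic ball $B_r(x)$ is normal whenever $r \le r_0$ and the monotonicity formula holds there, so that
\[
 r \longmapsto e^{\Lambda r}\,\frac{\|K\|(B_r(x))}{\omega_j\, r^{j}}, \qquad 0 < r \le r_0,
\]
is non-decreasing.

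Next I would upgrade the density estimate from almost everywhere to everywhere on $\mathrm{spt}(K)$. Because $K$ is rectifiable it has integer multiplicity, hence $\Theta^{j}(\|K\|, x) \ge 1$ for $\|K\|$-a.e.\ $x$; the monotonicity formula makes $x \mapsto \Theta^{j}(\|K\|, x)$ upper semicontinuous, and $\mathrm{spt}(K)$ is the closure of a set of full $\|K\|$-measure, so $\Theta^{j}(\|K\|, x) \ge 1$ for \emph{every} $x \in \mathrm{spt}(K)$. Letting $r \to 0^{+}$ in the monotone quantity then gives, for all $x \in \mathrm{spt}(K)$ and all $r \le r_0$,
\[
 \|K\|(B_r(x)) \;\ge\; e^{-\Lambda r}\,\omega_j\, r^{j}\,\Theta^{j}(\|K\|, x) \;\ge\; e^{-\Lambda r_0}\,\omega_j\, r^{j}.
\]

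To finish, I would choose $x \in \mathrm{spt}(K)$ with $\mathrm{dist}(x, \partial\overline W) \ge \alpha$ (such $x$ exists by hypothesis) and set $\rho = \min\{\alpha, r_0\}$; then $B_\rho(x) \subseteq W$, and
\[
 \mathbf M(K) \;\ge\; \|K\|(B_\rho(x)) \;\ge\; e^{-\Lambda r_0}\,\omega_j\, \rho^{\,j} \;=:\; \beta_j \;>\; 0 .
\]
Since the dimension $j$ ranges over the finite set $\{0, 1, \dots, \dim X\}$, and a nonzero $0$-cycle trivially has mass at least $1$, one would take $\beta = \min_{0 \le j \le \dim X} \beta_j$ (or simply $\beta = \beta_k$ if $K$ is meant to be $k$-dimensional); it depends only on $\alpha$ and $g|_{\overline W}$, as claimed. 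I expect the one delicate step to be the passage from the $\|K\|$-a.e.\ density bound to the pointwise bound on $\mathrm{spt}(K)$: this is exactly where the hypothesis $\delta K = 0$ is used, via the upper semicontinuity of density supplied by monotonicity, and it is indispensable — without stationarity the statement is false, since shrinking geodesic spheres carry arbitrarily small mass while containing points arbitrarily far from $\partial\overline W$. Checking that $r_0$ and $\Lambda$ can be taken uniform over $\overline W$ is a routine compactness argument I would not belabor.
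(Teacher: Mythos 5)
Your argument is correct and rests on the same core mechanism as the paper's proof: a.e.\ unit density for the integer-multiplicity varifold underlying $K$, propagated to every point of $\operatorname{spt}K$, combined with a monotonicity formula applied in a ball around a support point that is at least $\alpha$ from $\partial\overline W$, yielding $\mathbf M(K)\geq c\,\rho^{j}$. The difference is in how monotonicity is obtained. The paper isometrically embeds $(\overline W,g|_{\overline W})$ into some $(\mathbb R^{s},g_E)$ via Nash's theorem, observes that stationarity of $K$ in $W$ makes the first variation of the pushed-forward varifold bounded a.e.\ by a constant $A$ depending only on the embedding (the second fundamental form of $f(\overline W)$), and then quotes Allard's Euclidean monotonicity theorem verbatim; you instead invoke the intrinsic Riemannian monotonicity formula with an $e^{\Lambda r}$ correction, with $r_0$ and $\Lambda$ coming from uniform injectivity-radius and curvature bounds over $\overline W$. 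Each route buys something: the extrinsic one needs no intrinsic monotonicity statement at all (the geometry of $\overline W$ is absorbed into the single constant $A$, and Allard's theorem can be cited as stated), while yours avoids Nash's theorem and keeps all constants intrinsic; note, though, that the uniform intrinsic monotonicity you assert is itself usually proved either by Hessian comparison or precisely by such an isometric embedding, so the two proofs are close cousins. Two minor points you should tidy rather than fix: the varifold is stationary in $W$ (not in $X$), which is all you use since $\rho\leq\alpha$ keeps $B_\rho(x)$ inside $W$; and your claim that $r_0,\Lambda$ depend only on $g|_{\overline W}$ is harmless here but is cleanest if phrased for balls contained in $W$, exactly the ones you use. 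Your handling of the passage from a.e.\ density $\geq 1$ to density $\geq 1$ at every support point via upper semicontinuity is fine and is in fact slightly more carefully spelled out than in the paper, which simply selects a density-one point sufficiently far from $f(\partial\overline W)$.
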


\begin{proof}
%The proof follows from Allard's idea.
                        By Nash's embedding theorem \cite{Nash},
                        $(\overline W,g|_{\overline W})$ can be isometrically embedded through a map $f$ into
                        some Euclidean space $(\mathbb R^s, g_E)$.
%Let $K$ be an area-minimizing rectifiable current of $\overline W$.
                        Then $f_\#K$ is a rectifiable current of $f(\overline W)$.
                        Denote the induced varifold by $V_{f_\#K}$.
                        Since $K$ has no boundary in $W$ and $\delta K$ vanishes,
                        the norm of $\delta V_{f_\#K}$ in $\mathbb R^s$ is bounded from above a.e. by a constant $A$ depending upon $f$ only. 

                        Let $\overline{W_\alpha}=\{x\in W: \mathrm{dist}_g(x,\partial \overline W)\geq \alpha\}$.
                        Define $2\mu=\mathrm{dist}_{g_E}(f(\overline{W_\alpha}),f(\partial \overline W))$.
                        Note that the density of $V_{f_\#K}$ %(induced by a rectifiable current)
                        is a.e. at least one on the support $\mathrm{\bold{spt}}(f_\#K)$ of $f_\#K$.
                        Therefore there exists some point $p\in\bold{spt}(f_\#K)\bigcap  f(W)$
                        with  $\lambda\triangleq\mathrm{dist}_{g_E}(p,f(\partial \overline W))>\mu$ and density at least one.

                        By applying the following monotonicity result of Allard to $A,\ p,\ \mu$ and $U$ the open $\lambda$-ball centered at $p$,
                        we obtain our statement.
                        
                        \begin{thm}[\cite{Allard}]
                        Suppose $0\leq A<\infty$, $p\in \text{support of }\|V\|$, $V\in \bold{V}_m(U)$,
                        where $U$ is an open region of $\mathbb R^s$.
                        If $0<\mu<\text{dist}_{g_E}(p,\mathbb R^s-U)$ and 
                                        $$\|\delta V\|\bold B (p,r)\leq A\|V\|\bold B(p,r)\ \ \ \ \ whenever\ 0<r\leq \mu,$$
                        then $r^{-m}\|V\|\bold B(p,r)\exp Ar$ is nondecreasing in $r$ for $0<r\leq \mu$.
                        \end{thm}
\end{proof}
{\ }

\section{Reduction of $(\star)$ from Global to Local}\label{s4}
             The following theorem indicates that the essential difficulty of $(\star)$ comes from local. 
             Hence in \S\ref{s5} and \S\ref{s6} we make constructions on some neighborhood of $S$ only.
             
             \begin{thm}\label{fgtl}
                        Suppose $S$ is homologically area-minimizing in $(U,\bar g)$
                        where $U$ is an open neighborhood of $S$ and $\bar g$ is a smooth metric on $U$.
                        Then there exists a smooth metric $\hat g$ on the compact manifold $X$ such that
                        $S$ is homologically area-minimizing in $(X,\hat g)$.
             \end{thm}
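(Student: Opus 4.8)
The plan is to start from the metric $\bar g$ on the neighborhood $U$ of $S$ and extend it to a global smooth metric $\hat g$ on $X$ that agrees with $\bar g$ on a slightly smaller neighborhood $W$ of $S$, using a partition of unity; the only subtlety is to keep $\hat g = \bar g$ on a fixed open set $W \Supset S$ with $\overline W \subset U$ so that the local minimality hypothesis is preserved verbatim on $W$. Thus from the outset $S$ is still homologically area-minimizing in $(W,\hat g)$, and what must be shown is that no competitor which leaves $W$ can beat $S$.

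Next I would compare masses. Let $K$ be any integral current on $X$ homologous to $S$; write $K = (K \llcorner W) + (K \llcorner (X \setminus W))$ after choosing $\partial \overline W$ to meet $\mathrm{\bold{spt}}(S - K)$ transversally-in-the-slicing-sense, so that the slice $\langle K, \mathrm{dist}(\cdot,\partial\overline W), t\rangle$ is rectifiable for a.e.\ small $t$. Pushing $K \llcorner W$ forward by retracting $\partial\overline W$ onto itself produces a current $K'$ supported in $\overline W$ with $[K'] = [S]$ in $H_k(\overline W, \partial\overline W)$ and $\mathbf{M}(K') \le \mathbf{M}(K \llcorner W) + (\text{small boundary-collar term})$, while by minimality $\mathbf{M}(S) \le \mathbf{M}(K')$. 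Hence $\mathbf{M}(S) \le \mathbf{M}(K) + (\text{collar term})$, and the collar term can be made arbitrarily small provided $\mathbf{M}(K)$ is under control — so it suffices to prove the inequality for $K$ with $\mathbf{M}(K) \le \mathbf{M}(S)$, a harmless normalization since otherwise $K$ already loses.

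The crux, and the place where Lemma \ref{1} enters, is ruling out the degenerate scenario in which $S$ is \emph{not} minimal in $(X,\hat g)$ because a sequence of competitors concentrates mass far from $S$ while the part near $S$ degenerates; equivalently, one must show the infimum of mass in the class $[S]$ is attained and equals $\mathbf{M}(S)$. Run the direct method: take a mass-minimizing sequence $K_j$ in $[S]$ with $\mathbf{M}(K_j) \le \mathbf{M}(S)$; by the compactness theorem for integral currents a subsequence converges to an integral current $K_\infty$ with $[K_\infty] = [S]$ and $\mathbf{M}(K_\infty) \le \liminf \mathbf{M}(K_j) \le \mathbf{M}(S)$. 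The current $K_\infty$ is mass-minimizing in $(X,\hat g)$, hence (by the regularity/monotonicity theory) has stationary rectifiable varifold with vanishing generalized mean curvature, so any connected piece of $\mathrm{\bold{spt}}(K_\infty)$ lying in $X \setminus W$ and staying $\alpha$-away from $\partial\overline W$ is exactly the kind of current forbidden by Lemma \ref{1} to have mass below $\beta_{\alpha,\hat g}$ — this is the mechanism that prevents nontrivial ``escape'' of mass. Feeding this back into the cut-and-retract comparison of the previous paragraph, any competitor must essentially live in $\overline W$ up to negligible mass, where $\hat g = \bar g$ and $S$ wins; therefore $\mathbf{M}(S) \le \mathbf{M}(K)$ for all $K \in [S]$.

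The main obstacle I anticipate is the bookkeeping in the cut-and-retract step: making the boundary-collar error term genuinely small requires choosing the slicing radius $t$ adapted to each competitor and controlling the $(k-1)$-dimensional slice mass via the coarea inequality $\int_0^\delta \mathbf{M}(\langle K, u, t\rangle)\,dt \le \mathbf{M}(K \llcorner \{u \le \delta\})$, then taking $\delta \to 0$; one has to ensure the retraction of the slice back into $\overline W$ and the subsequent closing-off do not create homology defects, which is why the connected-sum construction of $S$ in \S\ref{s2} and the condition $[S] \ne [0]$ are used. Everything else is a packaging of standard GMT (compactness, coarea slicing, pushforward estimates) around the single nontrivial input, Lemma \ref{1}, whose role is precisely to quarantine the ``at least one point $\alpha$-away from $\partial\overline W$'' competitors that the naive comparison cannot touch.
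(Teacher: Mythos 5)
There is a genuine gap: you never connect the lower bound $\beta$ from Lemma \ref{1} to the mass of $S$, so the lemma does no actual work in your argument. You extend $\bar g$ to $\hat g$ without modifying it on or near $W$, and then assert that a minimizer $K_\infty$ cannot have a piece escaping into $X\setminus W$ because such a piece would have mass at least $\beta$. But that is only a contradiction if $\beta>\mathbf M_{\hat g}(S)$, and nothing in your construction arranges this: $\beta$ comes from Allard's monotonicity formula (roughly $c\,\mu^{k}e^{-A\mu}$, a small constant determined by the geometry outside $W$), whereas $\mathbf M(S)$ is whatever it happens to be, typically much larger. Indeed the statement your argument would establish --- that \emph{any} extension of $\bar g$ keeps $S$ homologically minimizing --- is false, since $[S]$ may have a representative far from $U$ that the extended metric makes much smaller. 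The paper's key step, missing from your proposal, is to conformally \emph{shrink} the metric on a small neighborhood $W''\supset S$ by a constant factor $\gamma=(t\beta^{-1}\mathrm{Vol}_{\tilde g}(S))^{-2/k}$, interpolating back to $\tilde g$ on $W'\setminus W''$. This makes $\mathbf M_{\hat g}(S)=\beta/t<\beta$ while (the conformal factor being $\geq\gamma$ on all of $W$ and $=\gamma$ on $W''\supset S$) preserving the homological minimality of $S$ inside $W$. The argument then closes in two lines: a Federer--Fleming minimizer $T$ in $[S]$ is nonzero because $[S]\neq 0$; if $\mathbf{spt}\,T$ left $W$ it would be stationary in $\overline{W'}{}^{c}$ with a point $\alpha$-far from $\partial\overline{W'}$, hence $\mathbf M(T)>\beta>\mathbf M(S)$, contradicting minimality; so $\mathbf{spt}\,T\subset W$ and the local hypothesis finishes.

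A secondary issue: your cut-and-retract decomposition of an arbitrary competitor $K$ (slicing at $\partial\overline W$, retracting, closing off) is both delicate --- restricting a cycle to $W$ creates boundary, and turning it back into a cycle homologous to $S$ \emph{within} $W$ with controlled extra mass is exactly the bookkeeping you flag but do not carry out --- and unnecessary: once $\mathbf M(S)<\beta$ is arranged, one only needs to test the single minimizer $T$, whose support is forced entirely into $W$, so no decomposition of general competitors is ever required.
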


\begin{proof}
             
             Take open neighborhoods $W$, $W'$ and $W''$ of $S$ so that $W''\Subset W'\Subset W\Subset U$
             and the closer of $W$ ($W'$ and $W''$ respectively) is a manifold with nonempty boundary.
             Extend $\bar g$ to a metric $\tilde g$ on $X$ with 
                     $$\tilde g|_W=\bar g|_W.$$
             Set $\alpha=\text{dist}_{\tilde g}(\partial \overline {W'},\partial \overline W)$.
             Let $\beta$ be the lower bound in Lemma \ref{1} for $\alpha$, domain $\overline {W'}^c$ and $\tilde g|_{\overline {W'}^c}$.
             Choose $\gamma=(t\beta^{-1}{\text{Vol}_{\tilde g}(S))^{-\frac{2}{k}}}<1$ for some large constant $t> 1$.
             Then construct $\hat g$ as follows.
                               \begin{equation}\label{G}
                               \hat g=
                               \begin{cases}
                               \gamma\tilde g& \text{on } W''\\
                               h\tilde g& \text{on } W''\sim W'\\
                               \tilde g& \text{on } X\sim W'
                               \end{cases}
                               \end{equation}
               where
% one can glue $(2\beta^{-1}{\text{Vol}_g(S))^{\frac{2}{k}}}\bar g|_{W}$ and $g$ together by a smooth function
                $h$ is a smooth function on $\overline{W'}\sim W''$, no less than $\gamma$ and equal to one near $\partial \overline{W'}$.
 \\{\ }
 
                          Now we show that $S$ is homologically area-minimizing
                           in $(X,\hat g)$.
\\{\ }

                          By the celebrated compactness result in Federer and Fleming \cite{FF}
                          there exists an area-minimizing current $T$ in $[S]$ with nonempty $\bold{spt}T$. 
\\{\ }

              Case One: $\bold{spt}T$ is not contained in $W$.
              According to our construction, $\bold{M}(S)=\frac{\beta}{t}<\beta<\bold{M}(T)$ by Lemma \ref{1}.
              Contradiction.
 \\{\ }
 
              Case Two: $\bold{spt}T\subset W$. 
              By assumption and \eqref{G} $S$ is homologically area-minimizing in $(W,\hat g|_W)$. 
              As a result, $S$ and $T$ share the same mass.
              Hence
              $S$ is homologically area-minimizing in $(X,\hat g)$.
  \end{proof}
                            \begin{rem}
                            $[S]\neq[0]\in H_k(X;\mathbb Z)$ is crucial in our proof.
                            \end{rem}
  {\ }
  
%*********************************************

%%%
%%%
%%%
 
   \section{Realization of Minimizing Hypercones}\label{s5}
   
              Choose a metric $g$ for our model in \S\ref{s2} such that
              \\
                   (i). balls $\bold{B}^g_{p_i}(1)$ of radius one centered at $p_i$ are disjoint, and
              \\
                   (ii). local model $S\bigcap \bold B_{p_i}^g(1)$ in $(\bold B_{p_i}^g(1),g|_{\bold B_{p_i}^g(1)})$
                   is exactly $C_1$ in $\mathbf (\bold B^{N}(1),g_E|_{\bold B^{N}(1)})$.
                                    \begin{figure}[ht]
                                    \begin{center}
                                    \includegraphics[scale=0.3]{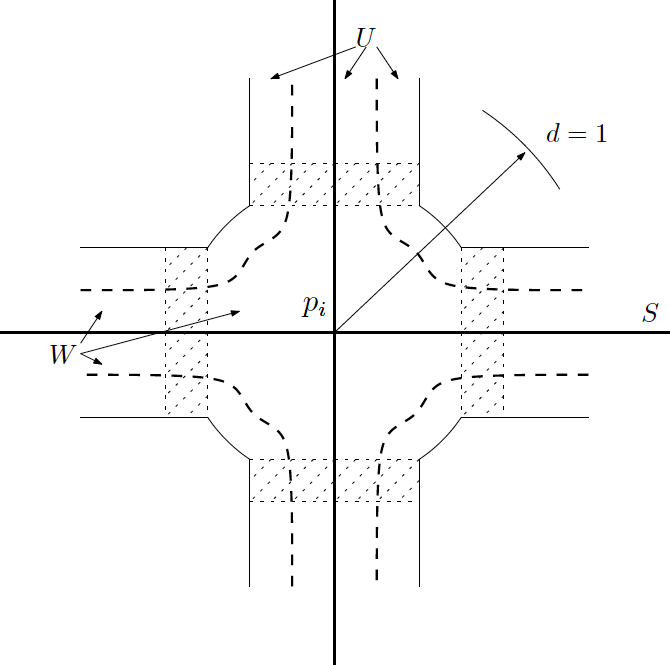}
                                    \end{center}
                                    \end{figure}
              {\ }\\{\ }\\
              Now take $U$ to be an open neighborhood of $S$ shown in the picture.
              \\{\ }
              
              Let us recall a beautiful result due to Hardt and Simon.
              
              \begin{thm}[Theorem 2.1 in \cite{HS}]
              Assume $C$ is an area-minimizing hypercone in $\mathbb R^{N}$.
              If $E$ is either one of the components $E_+$, $E_-$ of $\mathbb R^{N}\sim C$,
              then there is a unique oriented connected embedded real analytic minimizing hypersurface $H\subset E$
              with $H=\partial [[F]]$, $\overline F\subset \overline E$, $F$ open, the singular set of $H$ empty and the distance of $H$ and the origin equal to one.
              Moreover, $H$ has the property that for any $\xi\in E$ the ray $\{t\xi:t>0\}$ intersects $H$ in a single point.              
              \end{thm}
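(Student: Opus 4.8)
The plan is to construct the single leaf $H$ at distance $1$ from the origin by a variational exhaustion of $E:=E_+$, and then to deduce uniqueness and the foliation property from the strong maximum principle for minimal hypersurfaces together with a sliding argument. Write $L_E\subset S^{N-1}(1)$ for the open spherical domain with $\partial L_E=L$, so that $E=\{r\xi:\xi\in L_E,\ r>0\}$; recall that for the cones considered in \cite{HS} the link $L$ is a smooth embedded minimal hypersurface of the sphere. The picture to aim for is that $H$ is a radial graph $r=u(\xi)$ over $L_E$ with $u\to\infty$ as $\xi\to\partial L_E$ and $\min u=1$, with $F=\{r\xi:\xi\in L_E,\ r>u(\xi)\}$, and that the leaves are the dilates $\lambda H=\{r=\lambda u(\xi)\}$, $\lambda>0$.

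\emph{Step 1 (constrained minimizers).} For $k\in\mathbb N$ set $L_E^{(k)}=\{\xi\in L_E:\mathrm{dist}_{S^{N-1}}(\xi,L)>1/k\}$ and let $A_k$ be the closed cone over $L_E^{(k)}$ truncated inside a large ball $B_{\rho_k}$, with $\rho_k\to\infty$. Minimize $\mathbf M\big(\partial[[F]]\big)$ inside $A_k$ over open sets $F$ of locally finite perimeter with $\overline F\subset\overline E$, with $\partial[[F]]$ prescribed on $\partial A_k$ and outside $B_{\rho_k}$ to be the conical data interpolating $C$ and the outer boundary, and subject to the non-collapsing normalization that a fixed small ball $B_\delta(q)$, $q\in E$, $|q|=1$, is neither contained in $F$ nor disjoint from $\overline F$. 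Federer--Fleming compactness \cite{FF} produces a minimizer $F_k$; put $H_k=\partial[[F_k]]$, an area-minimizing current in the interior of $A_k$.

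\emph{Step 2 (estimates and passage to the limit).} Since $C$ is itself minimizing, a cut-and-paste/comparison argument shows each $F_k$ is radially monotone --- a ray meeting $F_k$ cannot leave it, since a recaptured competitor would contradict either the minimality of $H_k$ or that of $C$ --- so $H_k$ is a radial graph $r=u_k(\xi)$ over $L_E^{(k)}$; interior stability and curvature estimates for minimizing hypersurfaces then make $u_k$ smooth in the interior. The normalization at $B_\delta(q)$ and comparison with dilates of $C$ give $0<c\le\min u_k\le C<\infty$ together with uniform interior bounds on $u_k$ away from $L$, so a subsequence converges to a smooth minimal radial graph $H_0=\{r=u(\xi)\}$ over $L_E$ that is area-minimizing, has $\mathrm{dist}(H_0,0)>0$, and whose tangent cone at infinity, a minimizing cone lying in $\overline{E_+}$, is $C$ by the choice of outer data and monotonicity; in particular $u\to\infty$ at $\partial L_E$. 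Rescaling, $H:=H_0/\mathrm{dist}(H_0,0)$ has $\mathrm{dist}(H,0)=1$; the emptiness of $\operatorname{sing}H$ is exactly interior regularity for the radial minimal-graph equation once the graph property and interior gradient bound hold, and real-analyticity then follows from analytic elliptic regularity. This step is the main obstacle: one must simultaneously force the constrained minimizers to be genuine smooth radial graphs blowing up along $L$, with estimates uniform in $k$, so that no collapsing or folding occurs in the limit --- here the stability inequality, the dilation-monotonicity cut-and-paste, and the asymptotic-cone analysis at infinity all have to be orchestrated together.

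\emph{Step 3 (uniqueness and foliation).} If $H'$ also satisfies the conclusion, write $H'=\partial[[F']]$ and slide: for $\lambda$ large $\lambda H'$ is disjoint from $H$ and lies on the $F$-side, and decreasing $\lambda$ there is a first contact value $\lambda_0$, at which $H$ and $\lambda_0H'$ are smooth minimal hypersurfaces touching while locally on one side of one another, so the strong maximum principle forces $H=\lambda_0H'$, and $\mathrm{dist}(\cdot,0)=1$ on both gives $\lambda_0=1$, i.e. $H'=H$. The same sliding shows the dilates $\{\lambda H\}_{\lambda>0}$ are pairwise disjoint; since each is a radial graph over all of $L_E$, a ray $\{t\xi:t>0\}$ with $\xi\in L_E$ meets $\lambda H$ in the single point $t=\lambda u(\xi)$, and as $\lambda$ runs over $(0,\infty)$ these points exhaust the ray. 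Hence $\{\lambda H\}$ foliates $E$ and $\{t\xi:t>0\}$ meets $H$ in exactly one point, completing the proof.
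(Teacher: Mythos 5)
This statement is not proved in the paper at all: it is quoted verbatim as Theorem~2.1 of Hardt--Simon \cite{HS} and used as a black box, so the only meaningful comparison is with the original argument in \cite{HS}. Your outline does follow the broad Hardt--Simon strategy (a variational construction of a one-sided leaf, star-shapedness by a dilation cut-and-paste, uniqueness and the foliation by a sliding/maximum-principle argument), but as a proof it has genuine gaps at exactly the points where \cite{HS} has to work hardest.

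First, regularity. You dispose of smoothness by ``interior stability and curvature estimates'' and later by ``interior regularity for the radial minimal-graph equation once the graph property and interior gradient bound hold.'' Curvature estimates for stable or minimizing hypersurfaces fail once the dimension of $H$ is $\geq 7$ (this is precisely the regime of interest: nontrivial minimizing hypercones first occur for $N\geq 8$), and a minimizing boundary may a priori carry a singular set of codimension $7$. Moreover, the ray-monotonicity you invoke only says each ray meets $H_k$ at most once; it does not give transversality, hence no Lipschitz bound and no gradient bound for $u_k$, so the elliptic bootstrap you appeal to is not available --- the emptiness of $\mathrm{sing}\,H$ is one of the delicate conclusions of Theorem~2.1 in \cite{HS}, obtained there by a separate dimension-reduction/one-sided maximum principle argument, not a routine consequence of graphicality. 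You also never remove the artificial non-collapsing constraint (the minimizer could have the constraint active, in which case $H_k$ need not be minimal near $B_\delta(q)$), nor do you rule out the degenerate limits $H_0\subset C$ or $H_0=\emptyset$ beyond an assertion. Second, the sliding argument in Step 3 is incomplete: $H$ and $\lambda H'$ are noncompact, both asymptotic to the same cone $C$ at infinity, so as $\lambda$ decreases the ``first contact'' may occur only at infinity (or along $C$), and no interior touching point need exist; to run the maximum principle one needs quantitative asymptotic decay of the leaves toward $C$ (or the calibration-type argument actually used in \cite{HS}), which your sketch never establishes. As written, therefore, the proposal is a plausible roadmap but not a proof of the quoted theorem.
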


              Hence $E_\pm$ is foliated by $\Gamma_\pm=\{tH_\pm:t>0\}$.
              Let $X_\pm$ be the oriented unit normal vector of $\Gamma_\pm$ with limit $v_C$ (pointing into $E_+$) along $C\sim 0$,
              %Then div$X_\pm=0$.
              and $\phi_\pm$ the oriented volume form of $\Gamma_\pm$. 
              On $\mathbb R^{N+1}\sim 0$, define
                \begin{equation*}
                \phi=\begin{cases}\label{glueforms}
                \phi_+ &\text{ in }E_+\\
                \lim \phi_+(=\lim \phi_-)&\text{ in } C\sim 0\\
                \phi_- &\text{ in }E_-\\
                \end{cases}
                \end{equation*}
              According to \cite{HS}, outside some large ball, each $H_{\pm}$ is a graph of some $C^2$ function on $C$,
              so $\phi$ is $C^1$ along $C\sim 0$ and smooth elsewhere.
{\ }\\

              Our strategy is the following.
              
             \textbf{Step 1}: glue such forms around $p_1$ and $p_2$ to a form $\Phi$ in some neighborhood of $S$.
             
             \textbf{Step 2}: construct a smooth metric on the neighborhood so that $\Phi$ is a singular calibration of $S$.
             
             In this way a realization of a minimizing hypercone can be produced based upon \S\ref{s4}.
             \\{\ }

               Assume, for some $0<3R<1$, $\bold B_{p_i}(3R)\subset U$.
               Let $\bold r$ be the distance to the origin along $C$
                and $\Theta$ a small angular neighborhood over $C \bigcap \{1.4R<\bold r<2R\}$ shown in the figure.

                            \begin{figure}[ht]
                                    \begin{center}
                                    \includegraphics[scale=0.3]{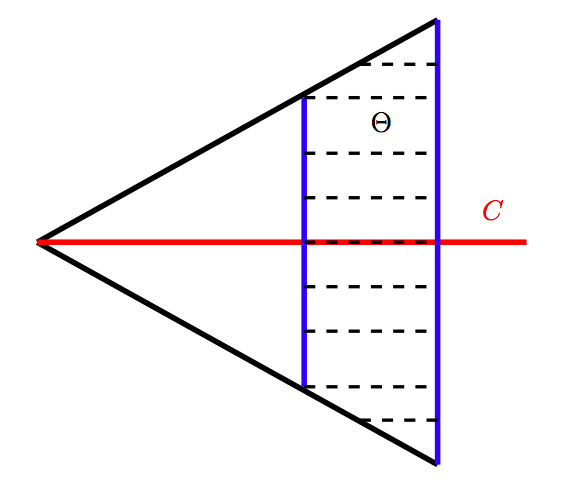}
                                    \end{center}
                            \end{figure}

              Set $\omega$ to be the unit volume form of the link $L$ of $C$ and $\psi=\bold r\omega$.
              Then $d\psi$ is the oriented unit $N$-dimensional form of $C\sim 0$.
              %Here we identify (co)tangent spaces of $C\sim 0$ along $tx$ for any positive $t$ and a fixed $x\in L$.
              %%(Namely, consider $C\sim 0\cong L\times \mathbb R^+$.)
              Since $div~ X_\pm=0$, one has (shrink $\Theta$ if necessary)
              $$\phi|_\Theta =[\pi^*d\psi]|_\Theta=[d(\pi^*\psi)]|_\Theta,$$
              where $\pi$ is the projection along $X_\pm$.
              On $\Theta$, let $\varpi$ be the projection to the nearest point on $C$
              and $\bold r=\bold r(\varpi(\cdot))$.
              Define
               $$
               \Phi=d[\tau(\bold r) (\pi^*\psi) +(1-\tau(\bold r))(\varpi^*\psi)],
               $$
               where $\tau$ is a decreasing smooth function from value one to zero on $[1.4R,2R]$ with the support of $d\tau$ contained in $[1.6R,1.7R]$.
               Note that $\Phi$ is the unit volume form of the cone in $\{1.4R<\bold r<2R\}\bigcap C$ . 
               \\{\ }
               
               For \textbf{Step 2}, we do some estimate on $\Phi$.
               Let $V$ be the parallel extension of $v_C$ along fibers of $\varpi$,
               $V^\perp$ the oriented unit $N$-vector perpendicular to $V$.
               Then on $\overline{E_+\bigcap \Theta}$
                \begin{equation}\label{long}
               \begin{split}
               &L_V\Phi\\
               =&L_Vd[\tau(\bold r) (\pi^*\psi) +(1-\tau(\bold r))(\varpi^*\psi)]\\
                             =&d[L_V(\tau(\bold r) (\pi^*\psi) +(1-\tau(\bold r))(\varpi^*\psi))]\\
                             =&d[\tau(\bold r) L_V(\pi^*\psi)]+d[(1-\tau(\bold r))L_V(\varpi^*\psi))]\\
                             =&(d\tau(\bold r))\wedge [i_V(d(\pi^*\psi))+d(i_V(\pi^*\psi))]+\tau(\bold r) d[L_V(\pi^*\psi)]\\
                                %&-(d\tau(\bold r))\wedge [i_V(d(\varpi^*\psi))+d(i_V(\varpi^*\psi))]+(1-\tau(\bold r)) d[L_V(\varpi^*\psi)]\\
                             =&(d\tau(\bold r))\wedge [i_V\phi+\pi^*d(i_{\pi_*V}\psi)]+\tau(\bold r) [L_V\phi]
                             %+(1-\tau(\bold r)) [L_V(\varpi^*d\psi)]
                            \\
               \end{split}
               \end{equation}
               Note that 
               \begin{equation}\label{W1}
               \varpi_*(V^\perp)=[1+O(\bold d^2_{g_E})]V^\perp|_C
               \end{equation}
                for the minimal cone $C$,
                 where $\bold d_{g_E}(\cdot)$ is the Euclidean distance to $C$.
                Consequently,
                \begin{equation}\label{W2}
                (L_VV^\perp)|_C=0.
                \end{equation}
               Therefore by \eqref{long} and \eqref{W2}
               \[
               (L_V[\Phi(V^\perp)])|_C=(L_V\Phi)|_C(V^\perp|_C).
               \]
               By the foliation structure, %and vanishing $L_V(\varpi^*d\psi)$ along $C$
               it follows from \eqref{long} that
                \[
               (L_V\Phi)|_C=\tau(\bold r) [L_V\phi]|_C.
               \]
               Since $\phi$ is a calibration, we obtain
               \begin{equation}\label{+}
               (L_V[(\Phi(V^\perp)])|_C=\tau(\bold r) (L_V[(\phi(V^\perp)])|_C\leq 0.
               \end{equation}
               The same argument on $\overline{E_-\bigcap \Theta}$ produces
               \begin{equation}\label{-}
               (L_{-V}[(\Phi(V^\perp)])|_C=\tau(\bold r) (L_{-V}[(\phi(V^\perp)])|_C\leq 0.
               \end{equation}
               {\ }
               
               Hence, \eqref{+}, \eqref{-} and the compactness of $[1.4R,2R]$ imply that
               there exists a positive constant $K$
               such that in a sufficiently small neighborhood $\Xi$ of $C\bigcap \Theta$ in $\Theta$
               \begin{equation}\label{ineq}
               \Phi(V^\perp)\leq1+K\bold d_{g_E}^2.
               \end{equation}
               {\ }

               Now consider the {\em smooth} metric on $\Xi$
               \begin{equation}\label{g2}
               \hat g=(1+K\varrho(\bold r)\bold d_{g_E}^2)^{\frac{2}{N}}g_E,
               \end{equation}
               where $\varrho$ is a smooth increasing function with value zero on $[1.4R,1.5R]$ and value one on $[1.6R,2R]$.
               Set
              \begin{equation}\label{g3}
              \check g= \rho(\bold r)\hat g+(1-\rho(\bold r))(\|\varpi^*d\psi\|^*_{g_E})^{\frac{2}{N}}g_E,
              \end{equation}
               where $\rho$ is one on $[1.4R,1.8R]$, decreases to zero on $[1.8R,1.9R]$ and keeps value zero on $[1.9R,2R]$.
               On $[1.7R,2R]$, since $\Phi(V^\perp)=\|\varpi^*d\psi\|^*_{g_E}$,
                    \eqref{ineq} guarantees 
                           $$\check g\geq (\|\varpi^*d\psi\|^*_{g_E})^{\frac{2}{N}}g_E.$$               
               Therefore, on $1.4R\leq \bold r\leq 2R$,
                \[
                \Phi(V^\perp_{\check g})\leq1,
                \]
               where $V^\perp_{\check g}$ is the the oriented unit $N$-vector perpendicular to $V$ under ${\check g}$.            
               %\\{\ }
               \begin{figure}[h]
                                     \begin{center}
                                     \includegraphics[scale=0.25]{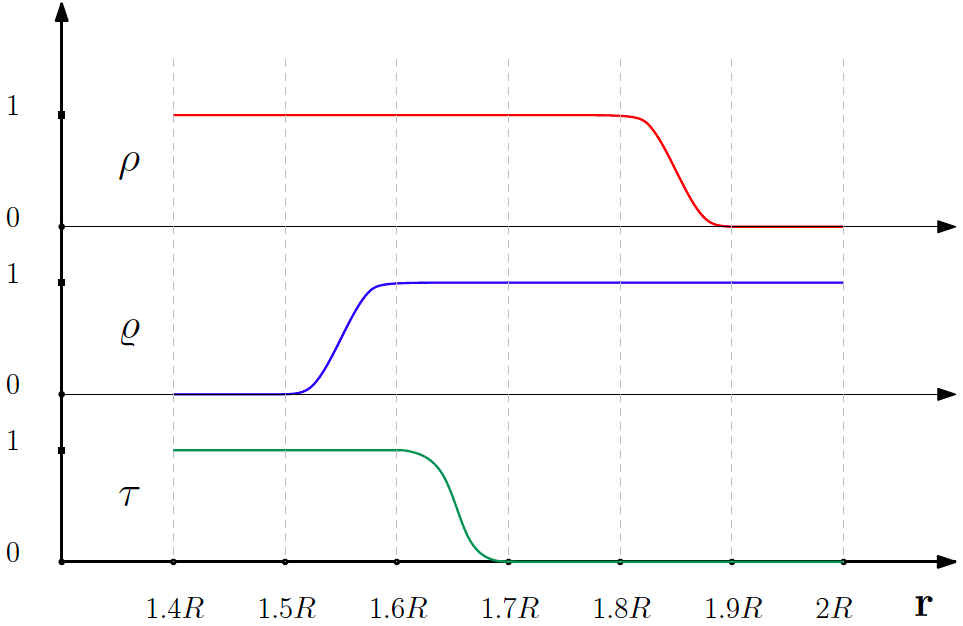}
                                     \end{center}
                                     \end{figure}

               By Lemmas {2.12} and {2.14} in Harvey and Lawson \cite{HL1}
               there exists a continuously varying $1$-dimensional plane field $\mathscr W$ transverse to $V^\perp_{\check g}$
               for $1.4R\leq \bold r\leq 2R$
               such that under the orthogonal combination $\tilde g= \check g|_{V^\perp}\oplus\tilde \alpha \check g|_{\mathscr W}$
               for some sufficiently large constant $\tilde \alpha$
               \[
               \|\Phi\|^*_{\tilde g}=\Phi(V^\perp_{\check g})\leq 1.
               \]
               However a vital flaw is that 
               $\tilde g$ may be NOT smooth.
              To conquer this, note that the angle between $V$ and $\mathscr W$
                   can be assumed strictly less than $\frac{\pi}{4}$ 
                   (the angle of $V$ and $\mathscr W$ being $0$ along ${C\bigcap\Xi}$)
                   in 
                   %a sufficiently small 
                   $\Xi$
                   on $1.4R\leq \bold r\leq 2R$.
              We define a {\em smooth} metric 
              \[
              \bar g=\check g|_{V^\perp}\oplus[1+ \varrho(\bold r)\rho(\bold r+0.1R) \sqrt{2}\tilde\alpha] \check g|_{V}
              \]
              on $\Xi$. (The shift term $0.1R$ is in fact not necessary.)
              Since 
              \begin{equation*}
                             \begin{split}
               &\text{on } [1.4R,1.6R],\ \ \ \ \  \|\Phi\|^*_{\bar g}\leq  \|\Phi\|^*_{\check g}=\|\phi\|^*_{\hat g}\leq \|\phi\|^*_{g_E}=1;\\
               &\text{on } [1.6R,1.7R],\ \ \ \ \ \|\Phi\|^*_{\bar g}\leq  \|\Phi\|^*_{\tilde g}\leq 1;\text{ and }\\
               &\text{on } [1.7R,2R],\ \ \ \ \ \ \ \ \|\Phi\|^*_{\bar g}\leq  \|\Phi\|^*_{\check g}=\Phi(V^\perp_{\check g})\leq 1,\\
                              \end{split}
               \end{equation*}     
we have
               \[
               \|\Phi\|^*_{\bar g} \leq 1.
               \]
               On $[1.4R,1.5R]$, $\Phi=\phi$ and $\bar g=g_E$.
               Meanwhile, on $[1.9R,2R]$, $\Phi=\varpi^*(d\psi)$ and $\bar g=\check g=(\|\varpi^*d\psi\|^*_{g_E})^{\frac{2}{N}}g_E$.
               {\ }\\{\ }
               
               It is apparent that this calibration pair of the $C^1$-calibration $\Phi$ and the smooth metric $\bar g$
                      can naturally extend on some neighborhood $\tilde U$ of $S$ in our model in \S\ref{s2}.
               According to Theorem 6.2 in \cite{F2} $S$ is homologically area-minimizing in $\tilde U$.
%%%%%%%%%%%%%
{\ }

\section{Realization of Oriented Lawlor Cones}\label{s6}
                Lawlor found many mass-minimizing cones in \cite{Law} by constructing particular
                calibrations discontinuous along boundary $\mathfrak B$ of some open angular neighborhood $\mathcal N$ for each of them.
                They are of form $\phi=d(f\tilde\psi)$ where $\tilde\psi$ is a smooth $(k-1)$-form on $\overline{\mathcal N}$ and
                where $f$ is at least $C^2$ along the cone and smooth elsewhere on $\mathcal N$,
                      Lipschitzian along $\mathfrak B$ with value zero on $(\overline{\mathcal N})^c$.
                Although $\phi$ is not continuous,
                through mollifications
                all oriented cones with such calibrations can be shown mass-minimizing.
                We will use the same idea.

                    %Assume $g$ is the metric given at the beginning of \S\ref{s5}.
                    First, one can similarly follow \textbf{Step 1} and \textbf{Step 2} in \S\ref{s5} with certain modifications.
                    Here most notations are taken directly from \S\ref{s5}.
                    
                    Recall $\psi=\bold r\omega$
                    where $\omega$ is the unit volume form of the link $L$ of an oriented Lawlor cone $C$.
              Then $d\psi$ is the oriented unit $k$-dimensional form of $C\sim 0$, and          
                                            $$\phi=d(f\cdot\varpi^*\psi)$$
                %where $\varpi$ is the projection to the nearest point on $C$ and
                where $f(q)=\tilde f(\tan \theta(q))$ and $\theta(q)$ is the angle between $\overrightarrow {Oq}$ and $\overrightarrow{O(\varpi(q))}$.                
               Set $t=\tan\theta(q)=\frac{\bold{d}_{g_E}(q)}{\bold r(q)}$.
               According to \cite{Law}
               $\tilde f(t)=1-at^2-bt^3+\cdots$ near $t=0$.

                Define
               $$
               \Phi=d[\tau(\bold r) (f\cdot\varpi^*\psi) +(1-\tau(\bold r))(\varpi^*\psi)].
               $$
               %where $\tau$ is a decreasing smooth function from value one to zero on $[1.4R,2R]$ with the support of $d\tau$ contained in $[1.6R,1.7R]$,

                For $q\in \mathcal N\sim C$, define $V_q=\frac{\overrightarrow{\varpi{(q)}q}}{|\overrightarrow{\varpi{(q)}q}|}$.
                Then we get a unit vector field $V$ on $\mathcal N\sim C$ whose limits on $C\sim 0$ give normal directions of $C\sim 0$.
                For $q\in \mathcal N$, denote by $F_q^\perp$ the oriented unit $k$-vector perpendicular to the fiber through $q$ and 
                it gives a $k$-vector field $F^\perp$ in $\mathcal N$.
                Since $L_V(\varpi^*\psi)=0$,
                 \begin{equation}\label{long2}
               \begin{split}
               &L_V\Phi\\
               =&L_Vd[\tau(\bold r) (f\varpi^*\psi) +(1-\tau(\bold r))(\varpi^*\psi)]\\
                             =&d[L_V(\tau(\bold r) (f\varpi^*\psi) +(1-\tau(\bold r))(\varpi^*\psi))]\\
                             =&d[f\tau(\bold r) L_V(\varpi^*\psi)]+d[L_V(f)\tau(\bold r)\varpi^*\psi]+d[(1-\tau(\bold r))L_V(\varpi^*\psi))]\\
                             =&d[L_V(f)\tau(\bold r)\varpi^*\psi]\\
               \end{split}
               \end{equation}
                              
               Let $\gamma(s)=\exp_p(s\nu)$ for $0\leq s<\epsilon$ where $\nu$ is a normal direction at a point $p$ of $C\sim 0$ and $\epsilon$ is small enough.
               So $\gamma'(s)=V_{\gamma(s)}$ for $0<s<\epsilon$ with $\lim _{s\rightarrow 0}V_{\gamma(s)}=\nu$.
               By Lemma 2.3.2 in \cite{Law},
               $$\lim_{s\rightarrow 0}(L_VF^\perp)_{\gamma(s)}=\left(\frac{d}{ds}|_{s=0}\det[I-sh^{\nu}_{ij}]^{-1}\right)F_{p}^\perp=0,$$
               where $h^{\nu}_{ij}$ is the second fundamental form at $p$ in normal direction $\nu$.
               Note that by \eqref{long2} $$\lim_{s\rightarrow 0}(L_V\Phi)_{\gamma(s)}$$ involves a normal direction.
               Therefore
               $$\lim_{s\rightarrow 0}(L_V[\Phi(F^\perp)])_{\gamma(s)}=0.$$
               {\ }
               
               Hence there exists a positive constant $K$
               such that in a sufficiently small neighborhood $\Xi$ of $C\bigcap \Theta$ in $\Theta$
               \begin{equation}\label{ineq2}
               \Phi(F^\perp)\leq1+K\bold d_{g_E}^2.
               \end{equation}
               
               Then following the procedures in \S\ref{s5} one can obtain
               a pair of $\Phi$ and $\bar g$ on some neighborhood $\tilde U$ of $S$, such that
               
               (1). $\bar g$ is a smooth metric,
               
               (2). the comass of $\Phi$ is no larger than $1$ where it is defined, and
               
               (3). $\Phi$ is the oriented volume form of the cone along $C\sim 0$.
               {\ }\\
               
            Take a smaller neighborhood $Y$ of $S$ where $Y\Subset \tilde U$ 
                and $(\overline Y,\bar g|_{\overline Y})$ forms a manifold with boundary.
                Isometrically embed $\overline Y$ into some Euclidean space $(\mathbb R^s, g_E)$ thru $F$.
        By the compactness of $F(\overline Y)$ 
             there is $\tau>0$ such that
             the exponential map restricted to the $\tau$-disk normal bundle $\mathfrak D$ over $F(Y)$ is a diffeomorphism.
        Denote by $\mathfrak N$ the image of $\mathfrak D$ and by $\pi$ the induced projection.
        Choose an open neighborhood $W\Subset Y$ of $S$.
        Let $\lambda=\mathrm{dist}_{g_E}(\partial \overline{F(Y)}, \partial{\overline {F(W)}})$.
        Then mollify $\pi^*((F^{-1})^*(\Phi))$
               with averaging radius $\epsilon<\epsilon_0=\frac{1}{2}\min\{\lambda,\tau\}$
               in the region $\{x\in \mathfrak N: \mathrm{dist}_{g_E}(x,\partial  \overline{\mathfrak N})\geq \epsilon_0\}$ of $\mathbb R^s$.
%$\mathfrak N\sim \{x\in \mathfrak N:|x-y|<\epsilon_0 \text{ for some } y\in \partial \overline{\mathfrak N}\}$
        Denote the generated smooth forms by $\tilde \Phi_\epsilon$ and
        set $\Phi_\epsilon=F^*(\tilde \Phi_\epsilon|_{F(W)})$.
        By the commutativity of the exterior differentiation and mollification in $\mathbb R^s$, it follows $d \Phi_\epsilon=0$.
\\{\ }

          Now we show that $S$ is homologically area-minimizing in $(\overline W,\bar g|_{\overline W})$.
          By \cite{FF}
                          there exists a minimizer $T=\overrightarrow T\cdot \|T\|$ in $[S]$. 
            Note that, except a measure $0$ set $\mathscr S$, $\bold{spt}T$ is a disjoint union of countably many $C^1$ submanifolds (see \cite{F}) and
            denote the bad set $(\bold{spt} T\sim \mathscr S)\bigcap \mathfrak B\sim 0$ by $\mathscr B$.
            Then $\mathscr B=\mathscr C\coprod\mathscr O$
            where $\mathscr C=\{x\in\mathscr B: \overrightarrow {T_x}\in \wedge^kT_x\mathfrak B \}$ and $\mathscr O=\mathscr B\sim \mathscr C$.
            The decomposition is unique up to a $\|T\|$-measure $0$ set.
            Obviously $\mathscr O$ is of $\|T\|$-measure $0$.
            Although $\Phi$ is not well defined along $\mathfrak B$,
            $\Phi_x(\overrightarrow T_x)$ makes sense on $\bold{spt}T\sim (\mathscr S\bigcup \mathscr O)$ with value $0$ on $\mathscr C$
            (due to the construction of $\phi$ in \cite{Law}).
             Also note that the uniformly bounded real-valued measurable function sequence $\Phi_\epsilon(\overrightarrow T)$
             converges to $\Phi(\overrightarrow T) $ pointwise on $\bold{spt}T\sim (\mathscr S\bigcup \mathscr O)$ (i.e., almost $\|T\|$-everywhere).
             Applying {Lebesgue}'s bounded convergence theorem we have
       
                \[
                \bold{M}(S)=\int_S\Phi=\lim_{\epsilon \downarrow 0}\int_S\Phi_\epsilon=\lim_{\epsilon \downarrow 0}\int\Phi_\epsilon(\overrightarrow T)d\|T\|
                =\int\Phi(\overrightarrow T)d\|T\|\leq \bold{M}(T).
                 \]   
                                    \begin{rem}
                                    $\Phi_\epsilon$ for $0<\epsilon<\epsilon_0$ may have comass greater than one under $\bar g$.
                                    \end{rem}
     
                        \begin{rem}\label{R}
                Similar argument shows that all Cheng's examples of homogeneous area-minimizing cones of codimension $2$ in \cite{Ch}
                        (e.g. minimal cones over
                        $U(7)/U(1)\times SU(2)^3$ in $\mathbb R^{42}$,
                        $Sp(n)\times Sp(3)/Sp(1)^3\times Sp(n-3)$ in $\mathbb R^{12n}$ for $n\geq 4$,
                        and $Sp(4)/Sp(1)^4$ in $\mathbb R^{27}$)
               can be realized as well.
                       \end{rem}
{\ }

 \section*{Acknowledgement}
 The author is deeply grateful to Professor H. Blaine Lawson, Jr. for his constant guidance and encouragement.
 He also wishes to thank Professor Frank Morgan for drawing our attention to N. Smale's work,
 Professor Robert Hardt and Professor Leon Simon for helpful comments on area-minimizing hypercones
 during the 2013 Midwest Geometry Conference and 2014 series lectures at Tsinghua,
 and the MSRI for its warm hospitality, travel fund and the financial support of the NSF under Grant No. 0932078 000 during his residence in the 2013 Fall.   
\\{\ }\\

\begin{bibdiv}
\begin{biblist}

%\bibitem{Allard} W. Allard,\ \ {\em On the First Variation of a Varifold},\ Ann. of Math. (1972) Vol. 95:\ 417-491.

\bib{Allard}{article}{
    author={Allard, William K.}
    title={On the first variation of a varifold},
    journal={Ann. Math. },
    volume={95},
    date={1972},
    pages={417--491},
}

%\bibitem{Ch}  B. Cheng,\ \ {\em Area-minimizing Cone-type Surfaces and Coflat Calibrations},\ Indiana Univ. Math. J. (1988) Vol. 37:\ 505-535.

\bib{Ch}{article}{
    author={Cheng, Benny N.},
    title={Area-minimizing cone-type surfaces and coflat calibrations},
    journal={Indiana Univ. Math. J.},
    volume={37},
    date={1988},
    pages={505--535},
}

%\bibitem{FF} H. Federer and W. Fleming,\ \ {\em Normal and integral currents}, Ann. Math. (1960) Vol. 72:\  458-520.

\bib{FF}{article}{
    author={Federer, Herbert},
    author={Fleming, Wendell H.},
    title={Normal and integral currents},
    journal={Ann. Math. },
    volume={72},
    date={1960},
    pages={458--520},
}

%\bibitem{F} H. Federer,\ \ {Geometric Measure Theory},\ {Springer-Verlag, New York}, 1969.

\bib{F}{book}{
    author={Federer, Herbert},
    title={Geometric Measure Theory},
    place={Springer-Verlag, New York},
    date={1969},
}

%\bibitem{F2} H. Federer,\ \ {\em Real flat chains, cochains and variational problems}, Indiana Univ. Math. J. (1974) Vol. 24:\ 351-407.

\bib{F2}{article}{
    author={Federer, Herbert},
    title={Real flat chains, cochains and variational problems},
    journal={Indiana Univ. Math. J. },
    volume={24},
    date={1974},
    pages={351--407},
}

%\bibitem{HS} R. Hardt and L. Simon, {\em Area minimizing hypersurfaces with isolated singularities}, J. Reine. Angew. Math. (1985) Vol. 362:\ 102-129.

\bib{HS}{article}{
    author={Hardt, Robert},
    author={Simon, Leon},
    title={Area minimizing hypersurfaces with isolated singularities},
    journal={J. Reine. Angew. Math.},
    volume={362},
    date={1985},
    pages={102--129},
}

%\bibitem{HL2} R. Harvey and H. B. Lawson, Jr.,\ \ {\em Calibrated geometries},\ Acta Math. (1982) Vol. 148:\ 47-157.
\bib{HL2}{article}{
    author={Harvey, F. Reese},
    author={{Lawson, Jr.}, H. Blaine},
    title={Calibrated geometries},
    journal={Acta Math.},
    volume={148},
    date={1982},
    pages={47--157},
}

%\bibitem{HL1} R. Harvey and H. B. Lawson, Jr.,\ \ {\em Calibrated Foliations},\  Amer. J. Math. (1982) Vol. 104:\ 607-633.

\bib{HL1}{article}{
    author={Harvey, F. Reese},
    author={{Lawson, Jr.}, H. Blaine},
    title={Calibrated foliations},
    journal={Amer. J. Math.},
    volume={104},
    date={1982},
    pages={607--633},
}

%\bibitem{Law} G. Lawlor,\ \ A Sufficient Criterion for a Cone to Be Area-Minimizing,\ {\em Mem. of the Amer. Math. Soc.}, Vol. 91, 1991.

\bib{Law}{book}{
    author={Lawlor, Gary R.},
    title={A Sufficient Criterion for a Cone to be Area-Minimizing,},
   place={Mem. of the Amer. Math. Soc.},
   volume={91},
   date={1991},
}

%\bibitem{BL} H. B. Lawson, Jr.,\ \ {\em The Equivariant Plateau Problem and Interior Regularity},\ Trans. Amer. Math. Soc. (1972) Vol. 173:\ 231-249.

\bib{BL}{article}{
    author={{Lawson, Jr.}, H. Blaine},
    title={The equivariant Plateau problem and interior regularity},
    journal={Trans. Amer. Math. Soc.},
    volume={173},
    date={1972},
    pages={231-249},
}

%\bibitem{FM2} F. Morgan,\ \ {\em On Finiteness of the Number of Stable Minimal Hypersurfaces with a Fixed Boundary},\ {Indiana Univ. Math. J.} (1986) Vol. 35:\ 779-833.

\bib{FM2}{article}{
    author={Morgan, Frank},
    title={On finiteness of the number of stable minimal hypersurfaces with a fixed boundary},
    journal={Indiana Univ. Math. J.},
    volume={35},
    date={1986},
    pages={779-833},
}

%\bibitem{Nash} J. Nash,\ \ {\em The imbedding problem for Riemannian manifolds},\ Ann. of Math. (1956) Vol. 63:\ 20-63.

\bib{Nash}{article}{
    author={Nash, John},
    title={The imbedding problem for Riemannian manifolds},
    journal={Ann. of Math.},
    volume={63},
    date={1956},
    pages={20--63},
}

%%%%%%%%%%%%%%%%%%

\bib{LS}{book}{
    author={Simon, Leon},
    title={Lectures on Geometric Measure Theory},
   place={Proc. Centre Math. Anal. Austral. Nat. Univ.},
   volume={3},
   date={1983},
}

%\bibitem{NS} N. Smale,\ \ {\em Singular homologically area minimizing surfaces of codimension one in Riemannian manifolds}, Invent. Math. (1999) Vol. 135:\ 145-183.

\bib{NS}{article}{
    author={Smale, Nathan},
    title={Singular homologically area minimizing surfaces of codimension one in Riemannian manifolds},
    journal={Invent. Math.},
    volume={135},
    date={1999},
    pages={145-183},
}

%\bibitem{NS2} N. Smale,\ \ {\em A Construction of Homologically Area Minimizing Hypersurfaces with Higher Dimensional Singular Sets},
%Trans. Amer. Math. Soc. (1999) Vol. 352:\ 2319-2330.

\bib{NS2}{article}{
    author={Smale, Nathan},
    title={A construction of homologically area minimizing hypersurfaces with higher dimensional singular sets},
    journal={Trans. Amer. Math. Soc.},
    volume={352},
    date={2000},
    pages={2319-2330},
}

%\bibitem{Z11} Y. Zhang,\ \ {\em How to construct metrics to control distributions of homologically mass-minimizing currents}. (Part of ``On Calibrated Geometry I: Gluing Techniques".)

%\bib{Z11}{article}{
%   author={Zhang, Yongsheng},
%   title={How to construct metrics to control distributions of homologically mass-minimizing currents}
%   }

%\bibitem{Z12} Y. Zhang,\ \ {\em On extending calibrations}.)

\bib{Z12}{article}{
   author={Zhang, Yongsheng},
   title={On extending calibration pairs. Available at arXiv:1511.03953.}
   }

%\bibitem{Z2} Y. Zhang,\ \ {\em On Lawson's Area-minimizing Hypercones}.

\bib{Z2}{article}{
   author={Zhang, Yongsheng},
   title={On Lawson's area-minimizing hypercones. Available at arXiv:1501.04681}
   }

\end{biblist}
\end{bibdiv}
{\ }

\end{document}